\theoremstyle{plain}
\newtheorem {theorem} {Theorem} [section]
\newtheorem {lemma} [theorem] {Lemma}
\theoremstyle{definition}
\newtheorem {remark}[theorem]{Remark}
\newtheorem {example}[theorem] {Example}
\newtheorem {algorithm}[theorem] {Algorithm}
\newcommand{\N}{\mathbb{N}}
\newcommand{\R}{\mathbb{R}}
\newcommand{\Z}{\mathbb{Z}}
\newcommand{\cc}{\mathscr C}
\newcommand{\F}{\mathcal F}
\newcommand{\T}{{\mathscr T}}
\newcommand{\Fb}{{\bf F}}
\newcommand{\yb}{{\bf y}}
\newcommand{\eb}{\pmb{\eta}}
\def \g {\gamma}
\newcommand{\mb}{\pmb{\mu}}
\begin{document}

\title[Boundary Value Problems]
{Symbolic Iterative Solution of Two-Point Boundary Value Problems}

\author[H.~Semiyari \& D.~S.~Shafer]{Hamid Semiyari$^1$ and Douglas S. Shafer$^2$}

\address{\noindent $^1$ Mathematics Department, James Madison University, Harrisonburg, Virgina 22807, USA}

\email{msah.sem@gmail.com}

\address{\noindent $^2$ Mathematics Department, University of North Carolina at Charlotte, Charlotte,
North Carolina 28223, USA}

\email{dsshafer@uncc.edu}

\subjclass[2010]{34B15}

\keywords{boundary value problem, Picard iteration, auxiliary variables}

\begin{abstract}
In this work we give an efficient method involving symbolic manipulation, Picard iteration, and auxiliary variables
for approximating solutions of two-point boundary value problems.
\end{abstract}

\maketitle

%%%%%%%%%%%%%%%%%%%%%%%%%%%%%%%%%%%%%%%%%%%%%%%%%%%%%%%%%%%%%%%%%%%%%%%%%%%%%%%%%%%%%%%%%%%%%%%%%%%%%%%%%%%%%%%%%%%
\section{Introduction}\label{s:intro}
%%%%%%%%%%%%%%%%%%%%%%%%%%%%%%%%%%%%%%%%%%%%%%%%%%%%%%%%%%%%%%%%%%%%%%%%%%%%%%%%%%%%%%%%%%%%%%%%%%%%%%%%%%%%%%%%%%%

There exist a variety of numerical methods for approximating solutions of two-point boundary value problems,
among them shooting methods, finite difference techniques, power series methods, and variational methods, all
of which are described in detail in classical texts (for example, \cite{BSW}, \cite{BF}, and \cite{K}). With the
advent of computer algebra systems hybrids of numerical and symbolic manipulation techniques have also arisen 
(for example, \cite{PP} and \cite{SW}). In this work we develop a purely symbolic technique for approximating
solutions of two-point boundary value problems that applies identically in both linear and nonlinear cases.

Fundamental to the technique is the idea of deriving an integral expression for the slope $\g$ of the solution 
$y(t)$ of the boundary value problem at the left endpoint; we then use a Picard iteration scheme to simultaneously
approximate, ever more closely, both $\g$ and $y(t)$, the latter now viewed as the unique solution to the initial
value problem that it determines at the left endpoint. We prove theorems guaranteeing both existence of a unique
solution to the boundary value problem and convergence of our iterates to it, although as we demonstrate the
technique works under conditions far more general than those given by the theorems. Since the theorems are proved
using the Contraction Mapping Theorem, the iterates obtained converge to $y(t)$ exponentially fast in the supremum
norm.

By introducing auxiliary variables we overcome problems with quadratures that cannot be performed in closed
form. Thus we ultimately obtain an efficient computational method whose output is a sequence of polynomials
converging to $y(t)$.

%%%%%%%%%%%%%%%%%%%%%%%%%%%%%%%%%%%%%%%%%%%%%%%%%%%%%%%%%%%%%%%%%%%%%%%%%%%%%%%%%%%%%%%%%%%%%%%%%%%%%%%%%%%%%%%%%%%
\section{The Algorithm}\label{s:alg}
%%%%%%%%%%%%%%%%%%%%%%%%%%%%%%%%%%%%%%%%%%%%%%%%%%%%%%%%%%%%%%%%%%%%%%%%%%%%%%%%%%%%%%%%%%%%%%%%%%%%%%%%%%%%%%%%%%%

Consider a two-point boundary value problem of the form
\renewcommand{\thefootnote}{}
\footnote{${}^1$ Mathematics Department, James Madison University, Harrisonburg, VA 22807, USA}
\footnote{${}^2$ Mathematics Department, University of North Carolina at Charlotte, Charlotte, NC 28223, USA 
                 (Corresponding author, \texttt{dsshafer@uncc.edu}, +1 704 687 5601, FAX +1 704 687 1392)}
\begin{equation}\label{basic.bvp}
y'' = f(t, y, y'), \qquad y(a) = \alpha, \quad y(b) = \beta \, .
\end{equation}
If $f$ is continuous and locally Lipschitz in the last two variables then by the Picard-Lindel\"of Theorem, for any 
$\gamma \in \R$ the initial value problem
\begin{equation}\label{basic.ivp}
y'' = f(t, y, y'), \qquad y(a) = \alpha, \quad y'(a) = \gamma
\end{equation}
will have a unique solution on some interval about $t = a$. 
The boundary value problem \eqref{basic.bvp} will have a solution if and only if there exists $\gamma \in \R$ such
that (i) the maximal interval of existence of the unique solution of \eqref{basic.ivp} contains the interval 
$[a, b]$, and (ii) the unique solution $y(t)$ of \eqref{basic.ivp} satisfies $y(b) = \beta$. But \eqref{basic.ivp} is
equivalent to the integral equation
\begin{equation}\label{equiv.ivp}
y(t) = \alpha + \gamma (t - a) + \int_a^t (t - s) f(s, y(s), y'(s)) \, ds.
\end{equation}
If $y(t)$ is a solution of \eqref{basic.bvp} then inserting it into \eqref{equiv.ivp}, evaluating at $t = b$, and 
solving the resulting equation for $\gamma$, we obtain an expression for the corresponding value of $\gamma$ in \eqref{basic.ivp}, namely
\begin{equation} \label{e:gamma}
\gamma = \tfrac1{b - a}
                \left(
                \beta - \alpha - \int_a^b (b - s) f(s, y(s), y'(s)) \, ds
                \right).
\end{equation}
(If \eqref{basic.bvp} has no solution then for any solution $y(t)$ of the ordinary differential equation in \eqref{basic.bvp} that
satisfies $y(a) = \alpha$ and exists on $[a, b]$ the number on the right hand side of \eqref{equiv.ivp}, hence the value of $\gamma$ specified by \eqref{e:gamma}, exists, but will not be equal to the originally determined value of $y'(a)$.) The key idea in the new
method proposed here for solving \eqref{basic.bvp} is that in a Picard iteration scheme applied to the system of first order equations
%\begin{equation}\label{basic.ivp.equiv}
%\begin{aligned}
\begin{align*}
y' &= u                  \mspace{124mu}y(a) = \alpha \\
u' &= f(t, y(t), u(t))    \mspace{25mu}u(a) = \gamma
\end{align*}
%\end{aligned}
%\end{equation}
that is equivalent to \eqref{basic.ivp} we use \eqref{e:gamma} to iteratively obtain successive approximations to the value of $\gamma$
in \eqref{basic.ivp}, if it exists. Thus making for the initial approximations of $y(t)$ and $u(t) = y'(t)$, the reasonable respective
choices of the left boundary value and the average slope of the solution to \eqref{basic.bvp} on the interval $[a, b]$, the iterates are
\begin{subequations} \label{e:iteration.split}
\begin{equation} \label{e:iteration.split.initial}
\begin{aligned}
y^{[0]}(t)   &\equiv \alpha \\
u^{[0]}(t)   &\equiv \tfrac{\beta - \alpha}{b - a} \\
%\gamma^{[0]} &=      \frac1{b-a}
%                     \Big(
%                     \beta - \alpha - \int_a^b (b - s) f(s, y^{[0]}(s), u^{[0]}(s)) \, ds
%                     \Big)  
\end{aligned}
\end{equation}
and 
\begin{equation}\label{e:iteration.split.recursion}
\begin{aligned}
\gamma^{[k+1]} &= \frac1{b-a}
                  \Big(
                  \beta - \alpha - \int_a^b (b - s) f(s, y^{[k]}(s), u^{[k]}(s)) \, ds
                  \Big) \\
y^{[k+1]}(t)   &= \alpha + \int_a^t u^{[k]}(s) \, ds \\
u^{[k+1]}(t)   &= \gamma^{[k+1]} + \int_a^t f(s, y^{[k]}(s), u^{[k]}(s)) \, ds \,.
\end{aligned}
\end{equation}
\end{subequations}
\begin{comment}
\begin{subequations} \label{e:iteration.split}
\begin{equation} \label{e:iteration.split.initial}
\begin{aligned}
y^{[0]}(t)   &\equiv \alpha \\
u^{[0]}(t)   &\equiv \tfrac{\beta - \alpha}{b - a} 
\end{aligned}
\end{equation}
and 
\begin{equation}\label{e:iteration.split.recursion}
\begin{aligned}
y^{[k+1]}(t)   &= \alpha + \int_a^t u^{[k]}(s) \, ds \\
u^{[k+1]}(t)   &= \frac1{b-a}
                  \Big(
                  \beta - \alpha - \int_a^b (b - s) f(s, y^{[k]}(s), u^{[k]}(s)) \, ds
                  \Big) \\
               &\mspace{40mu}+ \int_a^t f(s, y^{[k]}(s), u^{[k]}(s)) \, ds \,.
\end{aligned}
\end{equation}
\end{subequations}
\end{comment}
This gives the following algorithm for approximating solutions of \eqref{basic.bvp}.

\begin{algorithm} \label{r:alg1}
To approximate the solution of the boundary value problem
\begin{equation}\label{basic.bvp.dupe}
y'' = f(t, y, y'), \qquad y(a) = \alpha, \quad y(b) = \beta 
\end{equation}
iteratively compute the sequence of functions on $[a, b]$ defined by \eqref{e:iteration.split.initial}
and \eqref{e:iteration.split.recursion}.
\end{algorithm}

We will now state and prove a theorem that gives conditions guaranteeing that the problem \eqref{basic.bvp.dupe}
has a unique solution, then prove that the iterates in Algorithm \ref{r:alg1} converge to it. We will need the
following simple lemma whose proof is omitted.

\begin{lemma}\label{lemma.L}
Let $E \subset \R \times \R^2$ be open and let $f: E \to \R : (t, y, u) \mapsto f(t, y, u)$ be Lipschitz in
$\yb = (y,u)$ on $E$ with Lipschitz constant $L$ with respect to absolute value on $\R$ and the sum norm on
$\R^2$. Then 
\[
%\Fb : E \to \R^2 : \begin{pmatrix} y \\ u \end{pmatrix} \mapsto \begin{pmatrix} u \\ f(y, u) \end{pmatrix}
\Fb : E \to \R^2 : (t, y, u) \mapsto (u, f(t, y, u))
\]
is Lipschitz in $\yb$ with Lipschitz constant $1 + L$ with respect to the sum norm on $\R^2$.
\end{lemma}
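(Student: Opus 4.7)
The plan is to verify the Lipschitz estimate directly from the definition of the sum norm on $\R^2$, since no auxiliary machinery is required beyond expanding $\|\Fb(t,y_1,u_1)-\Fb(t,y_2,u_2)\|_1$ and applying the given hypothesis on $f$.

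First I would fix two points $(t,y_1,u_1)$ and $(t,y_2,u_2)$ in $E$ (with the same $t$-coordinate, since Lipschitz continuity is asserted only in $\yb$). Then I would write out
\[
\|\Fb(t,y_1,u_1)-\Fb(t,y_2,u_2)\|_1 = |u_1-u_2| + |f(t,y_1,u_1)-f(t,y_2,u_2)|,
\]
which is just the definition of the sum norm on $\R^2$ applied to the difference of the two vectors $(u_i, f(t,y_i,u_i))$.

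Next I would invoke the Lipschitz hypothesis on $f$ to dominate the second term by $L(|y_1-y_2|+|u_1-u_2|)$, and then collect the resulting expression $|u_1-u_2| + L|y_1-y_2| + L|u_1-u_2|$ and bound it above by $(1+L)(|y_1-y_2|+|u_1-u_2|) = (1+L)\|\yb_1-\yb_2\|_1$. That yields the desired constant $1+L$.

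There is no real obstacle: the only mildly delicate point is making sure the constant is genuinely $1+L$ rather than $\max\{1,L\}$, which is why one bounds the extra $|u_1-u_2|$ term by $1\cdot(|y_1-y_2|+|u_1-u_2|)$ instead of discarding it. The choice of sum norm (rather than, say, the Euclidean norm) is exactly what makes this constant come out cleanly.
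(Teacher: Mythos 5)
Your argument is correct and is precisely the straightforward verification the authors had in mind (the paper explicitly omits the proof of this lemma as "simple"): expand the sum norm of $\Fb(t,y_1,u_1)-\Fb(t,y_2,u_2)$, bound the $f$-term by $L(|y_1-y_2|+|u_1-u_2|)$, and absorb the leftover $|u_1-u_2|$ into $1\cdot(|y_1-y_2|+|u_1-u_2|)$ to get the constant $1+L$. Nothing is missing.
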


\begin{theorem}\label{thm.1}
Let $f : [a, b] \times \R^2 \to \R : (t, y,u) \mapsto f(t, y, u)$ be continuous and Lipschitz in $\yb = (y, u)$
with Lipschitz constant $L$ with respect to absolute value on $\R$ and the sum norm on $\R^2$. If 
$0 < b - a < (1 + \frac32 L)^{-1}$ then for any $\alpha$, $\beta \in \R$ the boundary value problem
\begin{equation} \label{orig_bvp.1}
y'' = f(t, y, y'), \qquad y(a) = \alpha, \quad y(b) = \beta
\end{equation}
has a unique solution.
\end{theorem}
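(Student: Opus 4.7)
The plan is to apply the Banach Contraction Mapping Theorem to the operator $T$ determined by the right-hand side of the iteration \eqref{e:iteration.split.recursion}, acting on the complete metric space $X = C([a,b],\R^2)$ equipped with the norm $\|\yb\| = \sup_{t \in [a,b]}(|y(t)| + |u(t)|)$, i.e., the supremum of the sum norm. Explicitly, for $\yb = (y,u) \in X$, set
\[
T\yb(t) = \Bigl( \alpha + \int_a^t u(s)\,ds,\; \gamma(\yb) + \int_a^t f(s,y(s),u(s))\,ds \Bigr),
\]
where $\gamma(\yb) = (b-a)^{-1}\bigl(\beta - \alpha - \int_a^b (b-s)f(s,y(s),u(s))\,ds\bigr)$. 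The iteration in \eqref{e:iteration.split.recursion} is then precisely $(y^{[k+1]}, u^{[k+1]}) = T(y^{[k]}, u^{[k]})$.

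The central estimate is the contraction bound. For $\yb_1, \yb_2 \in X$, the Lipschitz hypothesis on $f$ together with $\int_a^b (b-s)\,ds = (b-a)^2/2$ yields
\[
|\gamma(\yb_1) - \gamma(\yb_2)| \leq \tfrac{L(b-a)}{2}\,\|\yb_1 - \yb_2\|.
\]
By Lemma \ref{lemma.L}, the vector field $\Fb(s,y,u) = (u, f(s,y,u))$ is Lipschitz in $(y,u)$ with constant $1+L$ in the sum norm, so the two integral remainders in $T\yb_1(t) - T\yb_2(t)$ together contribute at most $(1+L)(b-a)\|\yb_1 - \yb_2\|$ in the sum norm on $\R^2$. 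Adding these two contributions gives
\[
\|T\yb_1 - T\yb_2\| \leq (b-a)\bigl(1 + \tfrac{3L}{2}\bigr)\|\yb_1 - \yb_2\|,
\]
and the hypothesis $(b-a)(1+\tfrac{3L}{2}) < 1$ makes $T$ a strict contraction, producing a unique fixed point $\yb^* = (y^*, u^*) \in X$.

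To conclude the theorem I would verify that $y^*$ is the unique solution of \eqref{orig_bvp.1}. Differentiating the fixed-point equations gives $(y^*)' = u^*$ and $(u^*)' = f(t, y^*, u^*)$, so $y^*$ is $C^2$, solves the ODE, and satisfies $y^*(a) = \alpha$. To obtain $y^*(b) = \beta$, I would compute $y^*(b) = \alpha + \int_a^b u^*(s)\,ds$, apply Fubini to rewrite $\int_a^b \int_a^s f(\tau, y^*, u^*)\,d\tau\,ds$ as $\int_a^b (b-\tau) f(\tau, y^*, u^*)\,d\tau$, and substitute the formula for $\gamma(\yb^*)$; the integral terms cancel exactly, leaving $\beta$. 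For uniqueness, any solution $\tilde y$ of \eqref{orig_bvp.1} paired with $\tilde u = \tilde y'$ is a fixed point of $T$ (as the derivation of \eqref{e:gamma} shows), so uniqueness of the BVP solution is inherited from uniqueness of the fixed point.

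The main point of care is to track the $\gamma$-bound and the integral bound separately so that they combine to give precisely the coefficient $1 + \tfrac{3L}{2}$ appearing in the hypothesis; the factor $\tfrac{(b-a)}{2}$ rather than $(b-a)$ in the $\gamma$ estimate, which comes from the weighted kernel $(b-s)$, is what makes this work. Everything else is routine Picard-style bookkeeping.
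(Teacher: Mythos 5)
Your proposal is correct and follows essentially the same route as the paper: the same operator, the same sum/supremum norms, the same use of Lemma \ref{lemma.L} to get the factor $1+L$, and the same split of the contraction constant into $\tfrac{L}{2}(b-a)$ from the $\gamma$ term plus $(1+L)(b-a)$ from the integral term. The only (harmless, and arguably cleaner) difference is that you work on $C([a,b],\R^2)$ directly, whereas the paper works on bounded continuous maps on an open neighborhood $U\supset[a,b]$ and then shrinks $U$ to remove the resulting $\varepsilon$ from the contraction estimate.
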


\begin{proof}
A twice continuously differentiable function $\eta$ from a neighborhood of $[a, b]$ into $\R$ solves the ordinary 
differential equation in \eqref{orig_bvp.1} if and only the mapping $(y(t), u(t)) = (\eta(t), \eta'(t))$ from that
neighborhood into $\R^2$ solves the integral equation
\[
\begin{pmatrix}
y(t) \\
u(t)
\end{pmatrix}
=
\begin{pmatrix}
y(a) \\
u(a)
\end{pmatrix}
+
\int_a^t
\begin{pmatrix}
u(s) \\
f(s, y(s), u(s)) 
\end{pmatrix}
\,
ds.
\]
By the discussion surrounding \eqref{e:gamma} $\eta$ meets both boundary conditions in \eqref{orig_bvp.1} if and only if 
$\eta(a) = \alpha$ and $\eta'(a) = \gamma$ where $\gamma$ is given by \eqref{e:gamma}, with $(y(s), u(s))$ replaced by 
$(\eta(s), \eta'(s))$. In short the boundary value problem \eqref{orig_bvp.1} is equivalent to the integral equation
\begin{equation}\label{long.ie}
\begin{pmatrix}
y(t) \\
u(t)
\end{pmatrix}
=
\begin{pmatrix}
\alpha \\
\tfrac{1}{b - a} \left[ \beta -\alpha - \int_a^b (b - s) f(s, y(s), u(s)) \, ds \right]
\end{pmatrix}
+
\int_a^t
\begin{pmatrix}
u(s) \\
f(s, y(s), u(s))
\end{pmatrix}
ds.
\end{equation}
If $\yb(t) = (y(t), u(t))$ is a bounded continuous mapping from a neighborhood $U$ of $[a, b]$ in $\R$ into $\R^2$
then the right hand side of \eqref{long.ie} is well defined and defines a bounded continuous mapping from $U$ into
$\R^2$. Thus letting $\cc$ denote the set of bounded continuous mappings from a fixed bounded open neighborhood $U$ 
of $[a, b]$ into $\R^2$, a twice continuously differentiable function $\eta$ on $U$ into $\R$ solves the boundary
value problem \eqref{orig_bvp.1} if and only if $\eb \stackrel{\textrm{def}}{=} (\eta, \eta')$ is a fixed point
of the operator $\T : \cc \to \cc$ defined by
\renewcommand{\baselinestretch}{.7}
\[
\T
\begin{pmatrix}
y \\
u
\end{pmatrix}
(t)
=
\begin{pmatrix}
\alpha \\
\left[ \tfrac{\beta -\alpha}{b - a} - \int_a^b \tfrac{b - s}{b - a} f(s, y(s), u(s)) \, ds \right]
\end{pmatrix}
+
\int_a^t
\begin{pmatrix}
u(s) \\
f(s, y(s), u(s))
\end{pmatrix}
ds,
\]
\renewcommand{\baselinestretch}{2}
which we abbreviate to
\renewcommand{\stretch}{.7}
\begin{equation}\label{Cont}
\begin{aligned}
\T(\yb)(t)
=
\begin{pmatrix}
\alpha \\
 \left[ \tfrac{\beta -\alpha}{b - a} - \int_a^b \tfrac{b - s}{b - a} f(s, \yb(s)) \, ds \right]
\end{pmatrix}
+
\int_a^t
\Fb(s, \yb(s)) \,ds
\end{aligned}
\end{equation}
\renewcommand{\baselinestretch}{2}
by defining $\Fb : [a, b] \times \R^2 \to \R$ by
$\Fb(t, y, u) = (u, f(t, y, u))$.

The vector space $\cc$ equipped with the supremum norm is well known to be complete. Thus by the Contraction Mapping 
Theorem the theorem will be proved if we can show that $\T$ is a contraction on $\cc$. To this end, let $\eb$ and 
$\mb$ be elements of $\cc$. Let $\varepsilon = \max \{ t - b : t \in U \}$. Then for any $t \in U$
\renewcommand{\stretch}{.7}
\begin{align*}
| (\T&\eb)(t) - (\T\mb)(t) |_\textrm{sum} \\
&\leqslant 
\left| 
\begin{pmatrix} 
\alpha \\
\left[ \frac{\beta -\alpha}{b - a}  - \int_a^b \frac{b - s}{b - a} f(s, \eb(s)) ds \right] 
\end{pmatrix} 
-
\begin{pmatrix} 
\alpha \\
\left[ \frac{\beta -\alpha}{b - a} - \int_a^b \frac{b - s}{b - a} f(s, \mb(s)) ds \right] 
\end{pmatrix} 
\right|_\textrm{sum} \\
&\mspace{200mu}+
\left|
\int_a^t \Fb(s, \eb(s)) - \Fb(s, \mb(s)) \, ds
\right|_\textrm{sum} \\
&\leqslant  \int_a^b \frac{b - s}{b - a} | f(s, \eb(s)) - f(s, \mb(s)) | \, ds 
                 + \int_a^t | \Fb(s, \eb(s)) - \Fb(s, \mb(s)) |_\textrm{sum} \, ds \\
&\overset{(*)}{\leqslant} \int_a^b \frac{b - s}{b - a} L | \eb(s) - \mb(s) |_\textrm{sum} \, ds 
                 + \int_a^t (1 + L) | \eb(s) - \mb(s) |_\textrm{sum} \, ds \\
&\leqslant \int_a^b \frac{b - s}{b - a} L || \eb - \mb ||_\textrm{sup} \, ds
                 + \int_a^t (1 + L) || \eb - \mb ||_\textrm{sup} \, ds \\
&\leqslant [ \tfrac12 (b - a) L + (1 + L) ((b - a) + \varepsilon ) ] || \eb - \mb ||_\textrm{sup}
\end{align*}
where for inequality ($*$) Lemma \ref{lemma.L} was applied in the second summand. Thus 
$|| \T\eb - \T\mb ||_\textrm{sup} \leqslant (1 + \frac32 L) ((b - a) + \varepsilon)) || \eb - \mb ||_\textrm{sup}$ 
and $\T$ is a contraction provided $(1 + \frac32 L) ((b - a) + \varepsilon) < 1$, equivalently, provided 
$(1 + \frac32 L) (b - a) < 1 - (1 + \frac32 L)\varepsilon$. But $U$ can be chosen arbitrarily, hence 
$(1 + \frac32 L)\varepsilon$ can be made arbitrarily small, giving the sufficient condition of the theorem. 
\end{proof}

Repeated composition of the mapping $\T$ in the proof of the theorem generates Picard iterates. The recursion 
\eqref{e:iteration.split} could be expressed without reference to $\gamma$ and so as to exactly form the Picard iterates 
based on the contraction $\T$ simply by 
%dropping the last line in \eqref{e:iteration.split.initial} (which is actually superfluous anyway) and 
replacing $\gamma^{[k+1]}$ in the last line in \eqref{e:iteration.split.recursion} by the right 
hand side of the expression for $\gamma^{[k+1]}$ in the first line in \eqref{e:iteration.split.recursion}. The recursion 
\eqref{e:iteration.split} was expressed as it was so as to match the discussion leading up to it and to make it convenient to 
track the estimates of $\gamma$ in applications of the algorithm in Section \ref{s:exa}. Therefore, since by the Contraction 
Mapping Theorem iterates $\T^n(\eb) = (\T \circ \cdots \circ \T)(\eb)$ converge to the fixed point for every choice of starting 
point, the iterates defined by \eqref{e:iteration.split} will converge to the unique solution of \eqref{orig_bvp.1} that the 
theorem guarantees to exist. Thus we have the following result.

\begin{theorem} \label{r:alg1.works}
Let $f : [a, b] \times \R^2 \to \R : (t, y,u) \mapsto f(t, y, u)$ be Lipschitz in $\yb = (y, u)$ with Lipschitz
constant $L$ with respect to absolute value on $\R$ and the sum norm on $\R^2$. If 
$0 < b - a < (1 + \frac32 L)^{-1}$ then for any $\alpha$, $\beta \in \R$ the iterates generated by
Algorithm \ref{r:alg1} converge to the unique solution of the boundary value problem
\begin{equation} \label{orig_bvp}
y'' = f(t, y, y'), \qquad y(a) = \alpha, \quad y(b) = \beta
\end{equation}
guaranteed by Theorem \ref{thm.1} to exist.
\end{theorem}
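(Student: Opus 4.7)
The plan is to reduce this theorem to a direct citation of the Contraction Mapping Theorem applied to the operator $\T$ already built and analyzed in the proof of Theorem~\ref{thm.1}. First I would record what Theorem~\ref{thm.1} has actually delivered: under the stated bound on $b - a$, the operator $\T : \cc \to \cc$ of \eqref{Cont} is a contraction on the complete metric space $\cc$ (supremum norm), its unique fixed point $\eb^{*} = (\eta,\eta')$ corresponds to the unique solution $\eta$ of \eqref{orig_bvp}, and iterates $\T^{k}(\yb)$ converge to $\eb^{*}$ for every starting $\yb \in \cc$. The theorem will therefore follow once Algorithm~\ref{r:alg1} is identified with Picard iteration of $\T$.

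The key step is a direct verification that the pair $(y^{[k+1]},u^{[k+1]})$ produced by \eqref{e:iteration.split.recursion} is exactly $\T(y^{[k]},u^{[k]})$. Substituting the formula for $\gamma^{[k+1]}$ in the first line of \eqref{e:iteration.split.recursion} into the third line yields
\[
u^{[k+1]}(t) = \tfrac{\beta-\alpha}{b-a} - \int_a^b \tfrac{b-s}{b-a}\,f(s,y^{[k]}(s),u^{[k]}(s))\,ds + \int_a^t f(s,y^{[k]}(s),u^{[k]}(s))\,ds,
\]
which matches the second component of $\T(y^{[k]},u^{[k]})$ in \eqref{Cont}; the first component $y^{[k+1]}(t) = \alpha + \int_a^t u^{[k]}(s)\,ds$ matches the first component of $\T$ directly. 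This is precisely the reformulation flagged in the paragraph preceding the theorem.

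With that identification, induction gives $(y^{[k]},u^{[k]}) = \T^{k}(y^{[0]},u^{[0]})$, where the starting pair $(\alpha,(\beta-\alpha)/(b-a))$ is (after extension by constants to a bounded neighborhood $U$ of $[a,b]$) an element of $\cc$. The Contraction Mapping Theorem, as already invoked in the proof of Theorem~\ref{thm.1}, then forces $(y^{[k]},u^{[k]}) \to (\eta,\eta')$ in the supremum norm, so in particular $y^{[k]} \to \eta$ uniformly on $[a,b]$, where $\eta$ is the unique solution of \eqref{orig_bvp} furnished by Theorem~\ref{thm.1}.

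There is no real obstacle here, since all the analytic work (Lipschitz estimate, contraction factor $(1+\tfrac32 L)(b-a+\varepsilon)$, completeness of $\cc$) was carried out in Theorem~\ref{thm.1}. The only thing to handle carefully is the algebraic rewriting that folds the auxiliary quantity $\gamma^{[k+1]}$ into the single operator $\T$; this is a one-line substitution, and once done the conclusion is immediate from the standard iterate-convergence statement of the Contraction Mapping Theorem.
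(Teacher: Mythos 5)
Your proposal is correct and follows essentially the same route as the paper: the paragraph preceding Theorem \ref{r:alg1.works} makes exactly the observation that substituting the expression for $\gamma^{[k+1]}$ into the last line of \eqref{e:iteration.split.recursion} turns the recursion into Picard iteration of the contraction $\T$ from \eqref{Cont}, and then invokes the Contraction Mapping Theorem's convergence of iterates from an arbitrary starting point. Your explicit write-out of the substituted formula for $u^{[k+1]}$ and the remark about extending the initial pair to an element of $\cc$ are just slightly more detailed versions of what the paper does.
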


\begin{remark}\label{mono.rmk}
Because Theorem \ref{thm.1} was proved by means of the Contraction Mapping Theorem approximate solutions 
$(y^{[k]}(t), u^{[k]}(t))$ to \eqref{long.ie} converge monotonically and exponentially fast. Since the sum norm was used 
on $\R^2$, however, the corresponding approximate solutions $y^{[k]}(t)$ to \eqref{orig_bvp.1} need not converge 
monotonically to the solution $y(t)$ of \eqref{orig_bvp}.
\end{remark}

\begin{remark}\label{mixed.rmk}
The ideas developed in this section can also be applied to two-point boundary value problems of the form 
\[
y'' = f(t, y, y'), \qquad y'(a) = \gamma, \quad y(b) = \beta \, ,
\]
so that in equation \eqref{basic.ivp} the constant $\gamma$ is now known and $\alpha = y(a)$ is unknown. Thus in 
\eqref{equiv.ivp} we evaluate at $t = b$ but now solve for $\alpha$ instead of $\gamma$, obtaining in place
of \eqref{e:gamma} the expression
\[
\alpha = \beta - \gamma (b - a) - \int_a^b (b - s) f(s, y(s), y'(s)) \, ds.
\]
In the Picard iteration scheme we now successively update an approximation of $\alpha$ starting with some initial
value $\alpha_0$. Theorems analogous to Theorems \ref{thm.1} and \ref{r:alg1.works} hold in this setting.
\end{remark}

The examples in Section \ref{s:exa} will show that the use of Algorithm \ref{r:alg1} is by no means restricted
to problems for which the hypotheses of Theorem \ref{r:alg1.works} are satisfied. It will in fact give satisfactory
results for many problems that do not satisfy those hypotheses.

%%%%%%%%%%%%%%%%%%%%%%%%%%%%%%%%%%%%%%%%%%%%%%%%%%%%%%%%%%%%%%%%%%%%%%%%%%%%%%%%%%%%%%%%%%%%%%%%%%%%%%%%%%%%%%%%%%%
\section{The Computational Method}\label{s:aux}
%%%%%%%%%%%%%%%%%%%%%%%%%%%%%%%%%%%%%%%%%%%%%%%%%%%%%%%%%%%%%%%%%%%%%%%%%%%%%%%%%%%%%%%%%%%%%%%%%%%%%%%%%%%%%%%%%%%

When the right hand side of the equation \eqref{orig_bvp} is a polynomial function then the integrations that
are involved in implementing Algorithm \ref{r:alg1} can always be done efficiently, but otherwise the Picard iterates 
can lead to impossible integrations. Consider, for example, the boundary value problem
\begin{equation}\label{psm1}
y'' = \sin y, \quad y(0) = 0, \quad y(\pi/8) = 1,
\end{equation}
with the corresponding first order system (with unknown constant $\gamma$)
\begin{equation}\label{psm1.sys}
\begin{aligned}
y' &= u      \\
u' &= \sin y
\end{aligned}
\qquad
\begin{aligned}
y(0) &= 0      \\
u(0) &= \gamma
\end{aligned}
\end{equation}
for which \eqref{e:iteration.split.recursion}, with explicit mention of $\gamma^{[k+1]}$ eliminated, is
\begin{equation}\label{sine.recursion}
\begin{aligned}
y^{[k+1]}(t) &= \int_0^t u^{[k]}(s) \, ds \\
u^{[k+1]}(t) &= \tfrac{8}{\pi}\Big[ 1 - \int_0^{\frac{\pi}{8}} (\tfrac{\pi}{8} - s) \sin y^{[k]}(s) \, ds \Big]
                + \int_0^t \sin y^{[k]}(s) \, ds.
\end{aligned}
\end{equation}
The first few iterates are readily computed but on the fourth iteration the expression for $u^{[4]}(t)$ contains terms
like $\int_0^t \sin(\frac{\pi^2}{64} \sin(\frac{8}{\pi}s)) \, ds$, which cannot be computed in closed form. In such a 
situation we use the auxiliary variable method as expounded by Parker and Sochacki (\cite{PS}; see also \cite{CPSW}). 
In this example we introduce the variable $v = \sin y$ and, since $v' = - \cos y \, y'$, the variable $w = \cos y$, so 
that \eqref{psm1.sys} is replaced by the four-dimensional problem
\begin{equation}\label{e:psm1.ult}
\begin{aligned}
y' &=   u  \\
u' &=   v  \\
v' &=  uw  \\ 
w' &= -uv
\end{aligned}
\qquad
\begin{aligned}
y(0) &= 0      \\
u(0) &= \gamma \\    
v(0) &= 0      \\
w(0) &= 1 \,,
\end{aligned}
\end{equation}
where the initial values for $v$ and $w$ come from their definitions in terms of $y(t)$ and the initial value of $y$.
Suppose that the unique solution to \eqref{psm1.sys} is $(y, u) = (\sigma(t), \tau(t))$ on some interval $J$ about 0 and that the 
unique solution to \eqref{e:psm1.ult} is $(y, u, v, w) = (\rho(t), \mu(t), \nu(t), \xi(t))$ on some interval $K$ about 0. Then by
construction $(y, u, v, w) = (\sigma(t), \tau(t), \sin \sigma(t), \cos \sigma(t))$ solves \eqref{e:psm1.ult} on $J$, hence we
conclude that on $J \cap K$ the function $y = \sigma(t)$, which in the general case we cannot find explicitly, is equal to the 
function $y = \rho(t)$, which we can approximate on any finite interval about 0 to any required accuracy. For although the 
dimension has increased, now the right hand sides of the differential equations are all polynomial functions so quadratures can 
be done easily. 

Applying the method of auxiliary variables in the implementation of Algorithm \ref{r:alg1} in general means simply adjoining to
\eqref{e:iteration.split.initial} initializations of auxiliary variables, say by their initial values as determined by their
definitions and the initial values $y(a) = \alpha$ and $u(a) = \gamma$, and adjoining to \eqref{e:iteration.split.recursion} the
obvious Picard recurrence expression arising from the initial value problems for the auxiliary variables, analogous to the last
two equations in \eqref{e:psm1.ult}. For example, for \eqref{psm1.sys} we obtain from \eqref{e:psm1.ult} the additional
initializations $v^{[0]}(t) \equiv 0$ and $w^{[0]}(t) \equiv 1$ and the additional recursion equations
$v^{[k+1]}(t) = \int_0^t u^{[k]}(s) w^{[k]}(s) \, ds$ and $w^{[k+1]}(t) = 1 - \int_0^t u^{[k]}(s) v^{[k]}(s) \, ds$.
Thus in general we obtain a computationally efficient method for approximating the solution.

The following theorem is stated and proved in \cite{PS}.

\begin{theorem}\label{r:PSM}
Let $\F = (f_1, \cdots, f_n) : \R^n \to \R^n$ be a polynomial mapping and $\yb = (y_1, \cdots, y_n) : \R \to \R^n$.  
Consider initial value problem  
\[
y'_j = f_k(\yb), \qquad y_j(0) = \alpha_j, \quad j = 1, \cdots, n
\]
and the corresponding Picard iterates $P_k(s) = (P_{1,k}(s), \cdots, P_{n,k}(s))$,
\begin{align*} 
P_{j,1}  (t) &= \alpha_j, \quad j = 1, \cdots, n \\
P_{j,k+1}(t) &= \alpha_j + \int_0^t f_j(P_k(s)) \, ds, \quad k = 1, 2, \cdots, \quad j = 1, \cdots, n.
\end{align*}
Then $P_{j,k+1}$ is the $k^{th}$ Maclaurin Polynomial for $y_j$ plus a polynomial all of whose terms have degree 
greater than k.
\end{theorem}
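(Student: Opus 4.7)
The plan is to proceed by induction on $k$, exploiting the fact that $\F$ is polynomial so that composing it with a polynomial that matches the true solution $\yb(s)$ in Taylor coefficients through order $k$ produces a function whose Taylor expansion at $0$ again matches $f_j(\yb(s))$ through order $k$. Let $M_{j,m}(t)$ denote the $m$th Maclaurin polynomial of $y_j$. The base case $k = 0$ is immediate: $P_{j,1}(t) \equiv \alpha_j = y_j(0) = M_{j,0}(t)$, with zero excess polynomial.

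For the inductive step, I would assume that for every $i$, $P_{i,k+1}(s) = M_{i,k}(s) + Q_{i,k+1}(s)$ where $Q_{i,k+1}$ is a polynomial with only terms of degree $> k$. Set $\epsilon_i(s) = P_{i,k+1}(s) - y_i(s)$; since $M_{i,k}$ and $y_i$ share Maclaurin coefficients through order $k$, the Taylor expansion of $\epsilon_i$ at $0$ vanishes through order $k$. The key algebraic point is then: for any polynomial $f_j$ in $n$ variables, $f_j(\yb(s) + \epsilon(s)) - f_j(\yb(s))$ is a sum of terms each carrying at least one factor $\epsilon_i(s)$, so its Taylor coefficients through order $k$ at $s = 0$ all vanish. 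This follows directly from writing out each monomial of $f_j$ via the multilinear identity, or equivalently Taylor's theorem applied to the polynomial $f_j$ at the point $\yb(s)$.

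Integration from $0$ to $t$ raises the order of Taylor agreement by one, so $\int_0^t f_j(P_{k+1}(s))\, ds$ and $\int_0^t f_j(\yb(s))\, ds = y_j(t) - \alpha_j$ share Maclaurin coefficients through order $k+1$. Adding $\alpha_j$ and recalling that $P_{j,k+2}$ is itself a polynomial, this is exactly the statement that $P_{j,k+2}(t) - M_{j,k+1}(t)$ has all its terms of degree strictly greater than $k+1$, which closes the induction.

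I expect no real obstacle beyond careful bookkeeping in the algebraic step: one must keep clear the distinction between the polynomial iterates $P_{i,k+1}$ and the generally only analytic true solutions $y_i$, interpreting ``agreement to order $k$'' as equality of the first $k+1$ Maclaurin coefficients. With that convention in place the induction is essentially formal, and the polynomial hypothesis on $\F$ is exactly what is needed to prevent the composition step from degrading the order of agreement.
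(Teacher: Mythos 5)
Your induction is correct: the base case, the observation that composing the polynomial $f_j$ with functions agreeing with $\yb$ to order $k$ preserves that order of agreement, and the gain of one order under integration together close the argument, and you handle the index shift ($P_{j,k+1}$ versus the $k$th Maclaurin polynomial) properly. Note that the paper itself gives no proof of this theorem --- it is quoted from the reference \cite{PS} of Parker and Sochacki --- but your argument is essentially the standard one found there, so there is nothing further to reconcile.
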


In \cite{CPSW} the authors address the issue as to which systems of ordinary differential equations can be handled by
this method. The procedure for defining the new variables is neither algorithmic nor unique. However, with sufficient
ingenuity it has been successfully applied in every case for which the original differential equation is analytic.

By Theorem \ref{r:PSM} Algorithm \ref{r:alg1} is generating approximations of the Maclaurin series of the solution of 
\eqref{basic.bvp}, but because of the convergence to $\gamma$ the approximations match the solution virtually perfectly 
at the right endpoint as well as at the left.

Because the method involves only repeated integration of polynomial functions, it is easy to code and experience
shows that it compares favorably in the computational time required to obtain results with accuracy comparable to
that obtained by means of such popular methods as the shooting method with fourth order Runge-Kutta numerics, the
power series method, and the finite difference method.

%%%%%%%%%%%%%%%%%%%%%%%%%%%%%%%%%%%%%%%%%%%%%%%%%%%%%%%%%%%%%%%%%%%%%%%%%%%%%%%%%%%%%%%%%%%%%%%%%%%%%%%%%%%%%%%%%%%
\section{Examples}\label{s:exa}
%%%%%%%%%%%%%%%%%%%%%%%%%%%%%%%%%%%%%%%%%%%%%%%%%%%%%%%%%%%%%%%%%%%%%%%%%%%%%%%%%%%%%%%%%%%%%%%%%%%%%%%%%%%%%%%%%%%

We will illustrate the method and its efficiency with several examples, both linear and nonlinear, and compare the 
result with the known exact solutions. Computations were done independently using Mathematica 10 and Maple 16.

\begin{example}\label{ex:-y.a}
Consider the second order linear ordinary differential equation $y'' = -y$. Since the right hand side is a polynomial 
function and is Lipschitz with Lipschitz constant 1 Algorithm \ref{r:alg1} can be applied directly to any corresponding 
two-point boundary value problem, and Theorem \ref{r:alg1.works} guarantees that the approximations to the solution of
the equivalent problem of the form \eqref{long.ie} that are generated will converge to the solution monotonically and 
exponentially fast, with respect to the supremum norm, on any interval $[a, b]$ for which $b - a < 2/5$. We will consider
two problems for which the solution is  is $y(t) = \cos t + \sin t$. The first is
\[%begin{equation}\label{e:ex.-y.a}
y'' = -y, \qquad y(0) = 1, \quad y(\tfrac\pi8) = \sqrt{1 + 1 / \sqrt{2}},
\]%end{equation}
for which the conditions of Theorem \ref{r:alg1.works} are met, and for which we find that for the first five iterates 
the errors, $|| y(t) - y^{[k]}(t) ||_\text{sup}$, $1 \leqslant k \leqslant 5$, rounded to five decimal places, are
\[
0.022 60 \qquad 0.003 39 \qquad 0.000 36 \qquad 0.000 05 \qquad 0.000 01
\]
and that $| \gamma - \gamma^{[k]} |$ for $1 \leqslant k \leqslant 5$ are
\[
0.022 94 \qquad 0.002 93 \qquad 0.000 41 \qquad 0.000 04 \qquad 0.000 01.
\]

However, Algorithm \ref{r:alg1} performs well even for intervals of length much greater than $2/5$. This is the case for 
the second problem (with the same solution),
\begin{equation}\label{e:ex.-y.a}
y'' = -y, \qquad y(0) = 1, \quad y(\tfrac\pi4) = \sqrt{2}.
\end{equation}
When Algorithm \ref{r:alg1} is applied to \eqref{e:ex.-y.a} we have that $|| y(t) - y^{[k]}(t) ||_\text{sup}$ for 
$1 \leqslant k \leqslant 5$ are
\[
0.100 00 \qquad 0.002 30 \qquad 0.006 40 \qquad 0.001 42 \qquad 0.000 40
\]
(see Remark \ref{mono.rmk}) and that $| \gamma - \gamma^{[k]} |$ for $1 \leqslant k \leqslant 5$ are
\[
0.079 91 \qquad 0.025 69 \qquad 0.005 50 \qquad 0.001 50 \qquad 0.000 35.
\]
As already noted, because we are forcing agreement at the two endpoints the error is virtually zero at each end of
the interval, which is the universal pattern.
\end{example}

\begin{example} \label{Nex1}
Consider non-linear boundary value problem
\begin{equation} \label{e:nonlin.bvp.example}
y'' = 16 + (3 - 2t)^3 + \tfrac14 y y', \qquad y(0) = \tfrac{43}{3}, \quad y(1) = 17,
\end{equation}
which has solution $y(t) = (3 - 2t)^2 + 16 (3 - 2t)^{-1}$. The right hand side of the differential equation is not Lipschitz in 
$\yb = (y, y')$ and the right endpoint 1 is close to where the solution blows up. Nevertheless when we attempt to apply Algorithm 
\ref{r:alg1} the iterates converge to the exact solution, albeit slowly. After eight iterations we have that 
$|| y(t) - y^{[8]}(t) ||_\text{sup} \approx 0.024$ and $\gamma^{[8]} \approx -8.457$ compared to $\gamma = -8.\bar4$. 
\end{example}

\begin{example}
Consider the boundary value problem
\begin{equation}\label{NCVRD.1}
y'' = -e^{-2y}, \qquad y(0) = 0, \quad y(1.2) = \ln \cos 1.2 \approx  -1.015\,123\,283,
\end{equation}
for which auxiliary variables must be introduced. The right hand side is not Lipschitz in $y$ yet in this case the algorithm 
works well. The unique solution is $y(t) = \ln \cos t$, yielding $\gamma = 0$.

Introducing the dependent variable $u = y'$ to obtain the equivalent first order system $y' = u$, $u' = e^{-y}$ and the 
variable $v = e^{-2y}$ to replace the transcendental function with a polynomial we obtain the expanded system
\begin{align*}
y' &=  u   \\
u' &= -v   \\
v' &= -2uv
\end{align*}
with initial conditions
\[
y(0) = 0, \quad u(0) = \gamma, \quad v(0) = 1
\]
(with $\gamma$ regarded as unknown here), a system on $\R^3$ for which the $y$-component is the solution of the boundary value problem
\eqref{NCVRD.1}. Thus in this instance
\[
y^{[0]}(t)   \equiv 0,                        \quad
u^{[0]}(t)   \equiv \frac{\ln \cos 1.2}{1.2}, \quad
v^{[0]}(t)   \equiv 1,                        \quad
%\gamma^{[0]} =      \frac{\ln \cos 1.2}{1.2}
\]
and
\begin{align*}
\gamma^{[k+1]} &= \Big( \ln \cos 1.2 + \int_0^{1.2} \, (1.2 - s) v^{[k]}(s) \, ds \Big) / 1.2 \\
y^{[k+1]}(t) &=       0        +   \int_0^t u^{[k]}(s)            \, ds \\
u^{[k+1]}(t) &= \gamma^{[k+1]} -   \int_0^t v^{[k]}(s)            \, ds \\
v^{[k+1]}(t) &=       1        - 2 \int_0^t u^{[k]}(s) v^{[k]}(s) \, ds \, .
\end{align*}
The first eight iterates of $\gamma$ are:
\[
\gamma^{[1]} = -0.24594,
\mspace{15mu}
\gamma^{[2]} =  \phantom{-}0.16011,
\mspace{15mu}
\gamma^{[3]} =  \phantom{-}0.19297,
\mspace{15mu}
\gamma^{[4]} =  0.04165,
\]
\[
\gamma^{[5]} = -0.04272,
\mspace{15mu}
\gamma^{[6]} = -0.04012,
\mspace{15mu}
\gamma^{[7]} = -0.00923,
\mspace{15mu}
\gamma^{[8]} =  0.01030,
\]
The maximum errors show a similar sort of pattern as they tend to zero; after eight iterations the maximum error is 
$|| y(t) - y^{[8]}(t) ||_\text{sup} \approx 0.0115$.
\end{example}

%%%%%%%%%%%%%%%%%%%%%%%%%%%%%%%%%%%%%%%%%%%%%%%%%%%%%%%%%%%%%%%%%%%%%%%%%%%%%%%%%%%%%%%%%%%%%%%%%%%%%%%%%%%%%%%%%%%
\section{Extended Theorem and Algorithm}\label{s:extended}
%%%%%%%%%%%%%%%%%%%%%%%%%%%%%%%%%%%%%%%%%%%%%%%%%%%%%%%%%%%%%%%%%%%%%%%%%%%%%%%%%%%%%%%%%%%%%%%%%%%%%%%%%%%%%%%%%%%

Theorem \ref{thm.1}, hence the theoretical scope of the algorithm presented in Section \ref{s:alg}, can be extended 
by partitioning the interval $[a, b]$ into $n$ subintervals and simultaneously and recursively approximating the 
solutions to the $n$ boundary value problems that are induced on the subintervals by \eqref{basic.bvp} and its 
solution.

If $\eta(t)$ is a solution of the original boundary value problem \eqref{basic.bvp} and the interval $[a, b]$ is 
subdivided into $n$ subintervals of equal length $h$ by means of a partition
\[
a = t_0 < t_1 < t_2 < \cdots < t_n = b
\]
then setting $\beta_j = \eta(t_j)$, $j = 1, \ldots, n-1$, we see that $n$ boundary value problems are induced:
\[
\begin{aligned}
y'' &= f(t, y, y') \\
y(t_0) &= \alpha, \mspace{5mu} y(t_1) = \beta_1
\end{aligned}
\quad
\begin{aligned}
y'' &= f(t, y, y') \\
y(t_1) &= \beta_1, \mspace{5mu} y(t_2) = \beta_2
\end{aligned}
\quad
\dots
\quad
\begin{aligned}
y'' &= f(t, y, y') \\
y(t_{n-1}) &= \beta_{n-1}, \mspace{5mu} y(t_n) = \beta.
\end{aligned}
\]
Setting $\gamma_j = \eta'(t_{j-1})$, $j = 1, \dots, n$, or computing them by means of an appropriate implementation 
of \eqref{e:gamma}, their solutions are solutions of the respective initial value problems
\[
\begin{aligned}
y''  = f&(t, y, y') \\
y(t_0)  &= \alpha,  \\
y'(t_0) &= \gamma_1
\end{aligned}
\mspace{40mu}
\begin{aligned}
y'' =  f&(t, y, y') \\
y(t_1)  &= \beta_1, \\
y'(t_1) &= \gamma_2
\end{aligned}
\mspace{40mu}
\dots
\mspace{40mu}
\begin{aligned}
y''      = f&(t, y, y')         \\
y(t_{n-1})  &= \beta_{n-1}, \\
y'(t_{n-1}) &= \gamma_n.
\end{aligned}
\]
We denote the solutions to these problems by $y_j(t)$ with derivatives $u_j(t) := y_j'(t)$, $j = 1, 2, 3, \dots, n$.
To make the presentation cleaner and easier to read we will use the following shorthand notation (for relevant choices 
of $j$), where the superscript $[k]$ will pertain to the $k$th iterate in the recursion to be described:
\begin{equation}\label{e:short.notation}
\begin{gathered}
f_j(s) = f(s, y_j(s), u_j(s))
\qquad
f_j^{[k]}(s) = f(s, y_j^{[k]}(s), u_j^{[k]}(s)) \\
\phantom{blank line} \\
\yb_j(s) = \begin{pmatrix} u_j(s) \\ y_j(s) \end{pmatrix}
\qquad
\yb_j^{[k]}(s) = \begin{pmatrix} u_j^{[k]}(s) \\ y_j^{[k]}(s) \end{pmatrix} \\
\phantom{blank line} \\
I_j = \int_{t_{j-1}}^{t_j} f_j(s) \, ds
\qquad
I_j^{[k]} = \int_{t_{j-1}}^{t_j} f_j^{[k]}(s) \, ds \\
\phantom{blank line} \\
J_j = \int_{t_{j-1}}^{t_j} (t_j - s) f_j(s) \, ds
\qquad
J_j^{[k]} = \int_{t_{j-1}}^{t_j} (t_j - s) f_j^{[k]}(s) \, ds.
\end{gathered}
\end{equation}

The idea for generating a sequence of successive approximations of $\eta(t)$ is to update the estimates of the 
functions $\yb_j(t)$ using 
\begin{equation}\label{e:ur.update.fns}
\yb_j(t) = \begin{pmatrix} \beta_{j-1} \\ \gamma_j \end{pmatrix}
         + \int_{t_{j-1}}^t \begin{pmatrix} u_j(s) \\ f_j(s) \end{pmatrix} \, ds
\end{equation}
and then update $\gamma_j$ and $\beta_j$ using 
\begin{equation}\label{e:ur.updates}
\gamma_j = \gamma_{j-1} + I_{j-1}
\qquad
\text{and}
\qquad
\beta_{j-1} =  \beta_j - h \, \gamma_j - J_j
\end{equation}
(starting with $j = 1$ and working our way up to $j = n$ for the $\gamma_j$ and in the reverse order with the 
$\beta_j$, with the convention that $\beta_n = \beta$), except that on the first step we update $\gamma_1$ using 
instead 
\begin{equation}\label{e:ur.update.gamma1}
\gamma_1 = \tfrac1h [ \beta_1 - \alpha - J_1 ]
\end{equation}
and there is no $\beta_0$. Note that the updates on the $\beta_j$ come from ``the right," i.e., values of 
$\beta_r$ with $r > j$, hence ultimately tying into $\beta$ at each pass through the recursion, while the updates 
on the $\gamma_j$ come from ``the left," i.e., values of $\gamma_r$ with $r < j$, hence ultimately tying into 
$\alpha$ at each pass through the recursion.

In fact we will not be able to make the estimates that we need to show convergence if we update $\beta_1$ on the 
basis given above. To obtain a useful formula on which to base the successive approximations of $\beta_1$, we
begin by using the second formula in \eqref{e:ur.updates} $n-1$ times:
\begin{align*}
\beta_1 &= \beta_2 - (h \, \gamma_2 + J_2) \\
        &= \beta_3 - (h \, \gamma_3 + J_3) - (h \, \gamma_2 + J_2) \\
        &= \beta_4 - (h \, \gamma_4 + J_4) - (h \, \gamma_3 + J_3) - (h \, \gamma_2 + J_2) \\
        &\mspace{20mu}\vdots \\
        &= \beta - (h \, \gamma_n + J_n) - \cdots - (h \, \gamma_2 + J_2) \\
        &= \beta - h(\gamma_2 + \cdots + \gamma_n) - (J_2 + \cdots + J_n).
\end{align*}
But by repeated application of the first equation in \eqref{e:ur.updates} and use of
\eqref{e:ur.update.gamma1} on the last step
\begin{align*}
&\mspace{20mu}\gamma_2 + \cdots + \phantom{3}  \gamma_{n-2} + \phantom{2} \gamma_{n-1} + \gamma_n \\
&= \gamma_2  + \cdots + \phantom{3} \gamma_{n-2} + 2 \gamma_{n-1} + I_{n-1}  \\
&= \gamma_2  + \cdots + 3 \gamma_{n-2} + 2 I_{n-2}      + I_{n-1}  \\
&\mspace{20mu}\vdots \\
&=(n-1)\gamma_2 + (n-2) I_2 + \cdots + 3 I_{n-3} + 2 I_{n-2} + I_{n-1} \\
&=(n-1) \gamma_1 + (n-1) I_1 + (n-2) I_2 + \cdots + 3 I_{n-3} + 2 I_{n-2} + I_{n-1} \\
&= \frac{n-1}{h} [ \beta_1 - \alpha - J_1 ]
 + (n-1) I_1 + (n-2) I_2 + \cdots + 3 I_{n-3} + 2 I_{n-2} + I_{n-1}.
\end{align*}
Inserting this expression into the previous display and solving the resulting equation for $\beta_1$ yields the 
formula
\begin{equation}\label{e:ur.update.beta1}
\beta_1 = \frac1n
          \left[
          \beta + (n-1) \alpha + (n-1) J_1 - \sum_{r=2}^n J_r - h \sum_{r=1}^{n-1} (n-r) I_r
          \right].
\end{equation}
Once an initialization has been chosen, an iteration procedure based on \eqref{e:ur.update.fns}, 
\eqref{e:ur.updates}, \eqref{e:ur.update.gamma1}, and \eqref{e:ur.update.beta1} is, with the convention 
$\beta_0 = \alpha$ and $\beta_n = \beta$, the shorthand notation introduced above, and order of evaluation in the
order listed,
\begin{subequations}\label{e:update.n}
\begin{gather}
\yb_j^{[k+1]}(t) = \begin{pmatrix} \beta_{j-1}^{[k]} \\ \gamma_j^{[k]} \end{pmatrix}
         + \int_{t_{j-1}}^t \begin{pmatrix} u_j^{[k]}(s) \\ f_j^{[k]}(s) \end{pmatrix} \, ds
\qquad j = 1, \cdots, n
\label{e:update.n.fns} \\
\phantom{blank line} \notag \\
\beta_1^{[k+1]} = \frac1n
                  \left[
                  \beta + (n-1) \alpha + (n-1) J_1^{[k+1]}
                  - \sum_{r=2}^n J_r^{[k+1]} - h \sum_{r=1}^{n-1} (n-r) I_r^{[k+1]}
                  \right]
\label{e:update.n.beta1} \\
\phantom{blank line} \notag \\
\gamma_1^{[k+1]} = \tfrac1h [ \beta_1^{[k+1]} - \alpha -J_1^{[k+1]} ]
\label{e:update.n.gamma1} \\
\phantom{blank line} \notag \\
\gamma_j^{[k+1]} = \gamma_{j-1}^{[k+1]} + I_{j-1}^{[k+1]}
\qquad j = 2, \cdots, n
\label{e:update.n.gamma.j}
\\
\phantom{blank line} \notag \\
\beta_{j-1}^{[k+1]} = \beta_j^{[k+1]} - h \, \gamma_j^{[k+1]} - J_j^{[k+1]}
\qquad
j = n, n-1, \ldots, 4, 3
\label{e:update.n.beta.j}
\end{gather}
\end{subequations}

\begin{theorem}\label{r:Thm.Mul}
Suppose the function $f(t, y, u)$ from $[a, b] \times \R^2$ into $\R$ is continuous and Lipschitz in $\yb = (y,u)$ 
with Lipschitz constant $L$. If there exists an integer $n \geqslant 1$ such that for the subdivision of $[a, b]$ 
into $n$ subintervals of equal length $h$ by the partition
\[
a = t_0 < t_1 < \cdots < t_{n-1} < t_n = b
\]
the inequality
\[
\frac1{2n} [ (n^3 + n^2 + n + 2) L + 2 ] (b - a) < 1
\]
holds if $h = (b-a)/n \leqslant 1$ or the inequality
\[
\frac1{2n^2} [ (n^3 + n^2 + n + 2) L + 2 ] (b - a)^2 < 1
\]
holds if $h = (b-a)/n \geqslant 1$, then there exists a solution of the two-point boundary value problem 
\eqref{basic.bvp}. Moreover, in the language of the notation introduced in the first paragraph of this section and 
display \eqref{e:short.notation}, for any initial choice of the functions $\yb_j(t) = (y_j(t), u_j(t))$, 
$1 \leqslant j \leqslant n$, the constants $\gamma_j$, $1 \leqslant j \leqslant n$, and the constants $\beta_j$, 
$1 \leqslant j \leqslant n-1$, the sequence of successive approximations defined by \eqref{e:update.n} converges to 
such a solution.
\end{theorem}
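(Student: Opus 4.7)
The plan is to adapt the contraction-mapping argument of Theorem~\ref{thm.1} to the $n$-subinterval setting. Let $\cc_j$ be the bounded continuous $\R^2$-valued functions on a fixed bounded open neighborhood of $[t_{j-1}, t_j]$, equipped with the supremum of the sum norm on $\R^2$, and let $X = \cc_1 \times \cdots \times \cc_n \times \R^n \times \R^{n-1}$, whose points are the tuples $(\yb_1,\dots,\yb_n, \gamma_1,\dots,\gamma_n, \beta_1,\dots,\beta_{n-1})$. With the sum of the supremum norms and absolute values, $X$ is complete. Define $\T : X \to X$ to be one pass of \eqref{e:update.n}: first produce $\yb_j^{[k+1]}$ from \eqref{e:update.n.fns} using the given $\beta_{j-1}, \gamma_j$; then compute $I_r^{[k+1]}, J_r^{[k+1]}$ from $\yb_r^{[k+1]}$; then update $\beta_1^{[k+1]}, \gamma_1^{[k+1]}, \gamma_j^{[k+1]}, \beta_{j-1}^{[k+1]}$ via \eqref{e:update.n.beta1}--\eqref{e:update.n.beta.j}. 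The iterates of $\T$ are precisely the sequence in the theorem.

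I would next record that any fixed point of $\T$ yields a solution of \eqref{basic.bvp}. At a fixed point the integral equation \eqref{e:update.n.fns} makes $y_j$ a $C^2$ solution of $y''=f(t,y,y')$ on $[t_{j-1},t_j]$ with $y_j(t_{j-1}) = \beta_{j-1}$ and $u_j(t_{j-1}) = \gamma_j$. A Fubini computation yields the automatic identity $\int_{t_{j-1}}^{t_j} u_j(s)\,ds = h\gamma_j + J_j$, which, combined with the fixed-point versions of \eqref{e:update.n.gamma.j} and \eqref{e:update.n.beta.j}, forces $y_j(t_j) = \beta_j = y_{j+1}(t_j)$ and $u_j(t_j) = \gamma_{j+1} = u_{j+1}(t_j)$. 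This gives $C^1$ matching at each interior node; the conventions $\beta_0 = \alpha$ and $\beta_n = \beta$ supply the boundary conditions, and the pieces glue into a solution on $[a,b]$.

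The core of the proof is showing $\T$ is a contraction. For two tuples in $X$, Lemma~\ref{lemma.L} and the Lipschitz hypothesis yield the per-subinterval bounds $|\Delta I_r^{[k+1]}| \leq hL\,\|\Delta\yb_r^{[k+1]}\|$ and $|\Delta J_r^{[k+1]}| \leq \tfrac{h^2 L}{2}\,\|\Delta\yb_r^{[k+1]}\|$, together with the preliminary estimate $\|\Delta\yb_j^{[k+1]}\| \leq |\Delta\beta_{j-1}| + |\Delta\gamma_j| + h(1+L)\,\|\Delta\yb_j\|$ from \eqref{e:update.n.fns}. Substituting into \eqref{e:update.n.beta1} and using $\sum_{r=1}^{n-1}(n-r) = n(n-1)/2$ bounds $|\Delta\beta_1^{[k+1]}|$; dividing by $h$ in \eqref{e:update.n.gamma1} gives $|\Delta\gamma_1^{[k+1]}|$; the cumulative formula \eqref{e:update.n.gamma.j} adds up to $n-1$ further $|\Delta I|$-terms; and the reverse cascade \eqref{e:update.n.beta.j} propagates the result back to $\beta_{j-1}^{[k+1]}$. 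Feeding everything into the preliminary bound on $\|\Delta\yb_j^{[k+1]}\|$ yields, after collecting powers of $h$, a contraction factor of the form $A(n,L)\,h + B(n,L)\,h^2$ with $A + B = \tfrac{1}{2}[(n^3+n^2+n+2)L+2]$. Bounding $h^2 \leq h$ when $h \leq 1$ (respectively $h \leq h^2$ when $h \geq 1$) and substituting $h = (b-a)/n$ recovers the two sufficient conditions in the theorem. The Contraction Mapping Theorem then produces a unique fixed point to which every orbit converges, and by the previous paragraph this fixed point is the desired solution of \eqref{basic.bvp}.

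The main obstacle is the bookkeeping in the contraction estimate: errors from a single $\yb_r$ feed into both $\beta_1$ and $\gamma_1$, are amplified by cumulative sums in the updates of $\gamma_j$ and $\beta_{j-1}$, and are finally fed back into $\yb_j^{[k+1]}$. Matching the combinatorial coefficients so that they collapse exactly to $n^3 + n^2 + n + 2$, while keeping the $h$ and $h^2$ contributions separated cleanly enough to cover both regimes $h \leq 1$ and $h \geq 1$ with the stated inequalities, is where the real care is required.
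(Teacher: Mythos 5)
Your chain of estimates is the right one and, carried out carefully, does produce exactly the constant $\tfrac1{2n}[(n^3+n^2+n+2)L+2](b-a)$ (respectively the $h\geqslant 1$ variant); the paper performs precisely the bookkeeping you describe, with $|\Delta J_r|\leqslant\tfrac12 Lh^2\|\Delta\yb\|$ and $|\Delta I_r|\leqslant Lh\|\Delta\yb\|$ (its Lemma \ref{r:fyjk.estimate}), the forward cascade for the $\gamma_j$, and the backward cascade for the $\beta_j$. But the packaging fails at one specific point: the map $\T$ you define on $X=\cc_1\times\cdots\times\cc_n\times\R^n\times\R^{n-1}$ is \emph{not} a contraction in the sum (or max) of the natural norms. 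The reason is that in \eqref{e:update.n.fns} the constants $\beta_{j-1},\gamma_j$ enter the new function $\yb_j^{[k+1]}$ additively with coefficient $1$, not with a factor of $h$. Concretely, take two states that agree in every component except that their values of $\gamma_1$ differ by $\delta$: after one pass the two new functions $\yb_1$ differ by exactly $\delta$ in supremum norm (the integrands use the old, identical functions), so $\|\T x-\T x'\|\geqslant\delta=\|x-x'\|$, and in the sum norm the new constants add a further $O(h\delta)$ on top. So the step ``the Contraction Mapping Theorem then produces a unique fixed point'' does not go through as stated.

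What your estimates actually prove is a bound on \emph{consecutive} differences of the iterates: since $\beta_j^{[k]}$ and $\gamma_j^{[k]}$ are themselves outputs of the $k$-th pass, one has $|\beta_j^{[k]}-\beta_j^{[k-1]}|,\ |\gamma_j^{[k]}-\gamma_j^{[k-1]}|\leqslant C\,h^{*}L\,\|\yb^{[k]}-\yb^{[k-1]}\|_\textrm{max}$, and feeding this into \eqref{e:update.n.fns} gives $\|\yb^{[k+1]}-\yb^{[k]}\|_\textrm{max}\leqslant c_2\,h^{*}\,\|\yb^{[k]}-\yb^{[k-1]}\|_\textrm{max}$ with $c_2h^{*}<1$ under the stated hypotheses. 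This is exactly why the paper does not invoke the Contraction Mapping Theorem but instead a Cauchy-sequence criterion (Lemma \ref{r:cauchy.cond}): a ratio bound on consecutive differences already forces convergence in the complete space. You can rescue your formulation either this way, or by observing that $\T\circ\T$ \emph{is} a contraction (a perturbation of a constant is converted into a function perturbation on the first pass and only then damped by $O(h^{*})$ on the second), or by using a weighted norm on $X$; the latter two repairs, however, change the constants and will not reproduce the theorem's inequalities without extra work. Your fixed-point consistency argument (the Fubini identity $\int_{t_{j-1}}^{t_j}u_j(s)\,ds=h\gamma_j+J_j$ forcing $C^1$ matching at the nodes) is correct and is the content the paper delegates to Lemma \ref{r:c2.lemma} together with the derivation of \eqref{e:ur.update.beta1}.
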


The following three lemmas will be needed in the proof. The straightforward proofs are omitted.

\begin{lemma}\label{r:cauchy.cond}
Let $x^{[k]}$ be a sequence in a normed vector space $(V, | \cdot |)$. If there exist a number $c < 1$ and an index 
$N \in \N$ such that
\[
| x^{[k+1]} - x^{[k]} | \leqslant c | x^{[k]} - x^{[k-1]} |
\quad
\text{for all} \quad k \geqslant N
\]
then the sequence $x^{[k]}$ is a Cauchy sequence.
\end{lemma}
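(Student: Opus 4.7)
The plan is to prove this by the standard geometric-series argument: under a contractive bound on consecutive differences, the tail of the telescoping sum of differences is dominated by a convergent geometric series, and Cauchiness follows.

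First I would iterate the hypothesis. For any $k \geqslant N$, repeated application of $|x^{[k+1]} - x^{[k]}| \leqslant c \, |x^{[k]} - x^{[k-1]}|$ yields
\[
|x^{[k+1]} - x^{[k]}| \leqslant c^{\,k-N} \, |x^{[N+1]} - x^{[N]}|,
\]
by a trivial induction on $k$. Set $M = |x^{[N+1]} - x^{[N]}|$; this is a fixed nonnegative constant that depends only on the two initial terms indexed by $N$ and $N+1$.

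Next, for any integers $m > k \geqslant N$ I would apply the triangle inequality along the telescoping chain and insert the previous bound:
\[
|x^{[m]} - x^{[k]}| \leqslant \sum_{j=k}^{m-1} |x^{[j+1]} - x^{[j]}| \leqslant M \sum_{j=k}^{m-1} c^{\,j-N} \leqslant \frac{M \, c^{\,k-N}}{1 - c}.
\]
The last inequality uses $0 \leqslant c < 1$ to sum the geometric tail. Since $c^{\,k-N} \to 0$ as $k \to \infty$, for any $\varepsilon > 0$ one can choose $K \geqslant N$ large enough that $M \, c^{\,K-N}/(1-c) < \varepsilon$; then $|x^{[m]} - x^{[k]}| < \varepsilon$ for all $m > k \geqslant K$, which is the Cauchy condition.

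There is no real obstacle here: the only minor care required is handling the case $M = 0$ separately (in which the sequence is eventually constant from index $N$ onward, hence trivially Cauchy) and noting that the hypothesis $c < 1$ is what makes the geometric series converge. The author calls this proof straightforward and omits it for exactly that reason.
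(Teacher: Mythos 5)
Your proof is correct and is exactly the standard geometric-series/telescoping argument that the authors have in mind when they declare the proof ``straightforward'' and omit it; nothing is missing. (The only cosmetic point is that $c\geqslant 0$ is implicit, since otherwise the hypothesis forces the consecutive differences to vanish and the sequence is eventually constant --- which you already note via the $M=0$ case.)
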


\begin{lemma}\label{r:fyjk.estimate}
Suppose the interval $[a, b]$ has been partitioned into $n$ subintervals of equal length $h = (b - a) / n$ by 
partition points $a = t_0 < t_1 < \cdots < t_{n-1} < t_n = b$. With the notation
\[
\yb_j(t) = \begin{pmatrix} y_j(t) \\ u_j(t) \end{pmatrix}
\quad
\text{and}
\quad
f_j^{[r]}(s) = f(s, y_j^{[r]}(s), u_j^{[r]}(s)),
\quad
r \in \Z^+ \cup \{ 0 \},
\quad
j = 1, 2,
\]
the following estimates hold:
\begin{equation}\label{int.est.simple}
\int_{t_{j-1}}^{t_j} | f_j^{[k+1]}(s) - f_j^{[k]}(s) | \, ds
\leqslant
L \, h || \yb^{[k+1]} - \yb^{[k]} ||_\textrm{max}
\end{equation}
and
\begin{equation}\label{int.est.complex}
\int_{t_{j-1}}^{t_j} (t_j - s) | f_j^{[k+1]}(s) - f_j^{[k]}(s) | \, ds
\leqslant
\tfrac 12 \, L \, h^2 || \yb^{[k+1]} - \yb^{[k]} ||_\textrm{max}.
\end{equation}
\end{lemma}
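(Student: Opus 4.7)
The plan is to apply the Lipschitz hypothesis on $f$ pointwise in $s$ and then carry out the elementary quadratures $\int_{t_{j-1}}^{t_j} 1 \, ds = h$ and $\int_{t_{j-1}}^{t_j} (t_j - s) \, ds = h^2/2$. First I would fix $j$ and any $s \in [t_{j-1}, t_j]$. By the Lipschitz hypothesis on $f$ in the variables $\yb = (y, u)$ with constant $L$ and with respect to the sum norm on $\R^2$ (the convention adopted throughout Theorem \ref{thm.1}),
\[
| f_j^{[k+1]}(s) - f_j^{[k]}(s) | \leqslant L \, | \yb_j^{[k+1]}(s) - \yb_j^{[k]}(s) |_\textrm{sum}.
\]

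The norm $||\cdot||_\textrm{max}$ is understood to be the maximum, taken over $1 \leqslant j \leqslant n$ and $s \in [t_{j-1}, t_j]$, of the pointwise sum norm of the component $\yb_j(s)$; so the right-hand side of the displayed inequality is bounded by $L \, || \yb^{[k+1]} - \yb^{[k]} ||_\textrm{max}$, which is a constant in $s$. Estimate \eqref{int.est.simple} then follows by integrating this uniform bound over $[t_{j-1}, t_j]$, whose length is $h$. Estimate \eqref{int.est.complex} follows by multiplying the same uniform bound by the nonnegative weight $(t_j - s)$ and integrating, using $\int_{t_{j-1}}^{t_j} (t_j - s) \, ds = h^2/2$.

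The only real subtlety is making precise the definition of $||\cdot||_\textrm{max}$ so that the pointwise sum-norm quantity can be pulled out of each integral as a genuine constant; once that convention is in place there is no substantive obstacle, which is why the authors have labeled these proofs as straightforward and omitted them.
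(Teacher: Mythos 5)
Your argument is correct and is precisely the straightforward proof the paper omits: apply the Lipschitz bound pointwise, replace the pointwise sum norm by the constant $\|\yb^{[k+1]}-\yb^{[k]}\|_\textrm{max}$ (which, per the norm conventions set out in the proof of Theorem \ref{r:Thm.Mul}, dominates $|\yb_j^{[k+1]}(s)-\yb_j^{[k]}(s)|_\textrm{sum}$ for every $j$ and $s$), and evaluate the two elementary integrals $h$ and $h^2/2$. This matches exactly how the estimates are invoked in displays \eqref{int.est.simple.gen} and \eqref{int.est.complex.gen} of the paper, so no further comment is needed.
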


\begin{lemma}\label{r:c2.lemma}
Suppose $\eta : (-\epsilon, \epsilon) \to \R$ is continuous and that $\eta'$ exists on 
$(-\epsilon, 0) \cup (0, \epsilon)$. Suppose $g : (-\epsilon, \epsilon) \to \R$ is continuous and that $g = \eta'$ 
on $(-\epsilon, 0) \cup (0, \epsilon)$. Then $\eta'$ exists and is continuous on $(-\epsilon, \epsilon)$.
\end{lemma}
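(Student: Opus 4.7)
The only real content of the lemma is to establish that $\eta'(0)$ exists and equals $g(0)$; once that is in hand, $\eta' = g$ everywhere on $(-\epsilon,\epsilon)$ and the continuity of $\eta'$ follows immediately from the continuity of $g$. So my plan is to focus entirely on the point $t = 0$.

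To handle $\eta'(0)$, I would use the Mean Value Theorem on one-sided intervals abutting $0$. Fix $t$ with $0 < t < \epsilon$. Since $\eta$ is continuous on $[0,t]$ and differentiable on the open interval $(0,t) \subset (0,\epsilon)$, the MVT furnishes a point $c_t \in (0,t)$ with
\[
\frac{\eta(t) - \eta(0)}{t} = \eta'(c_t) = g(c_t),
\]
the last equality by hypothesis. As $t \to 0^+$ we have $c_t \to 0^+$, and the continuity of $g$ on $(-\epsilon,\epsilon)$ gives $g(c_t) \to g(0)$. Hence the right-hand derivative of $\eta$ at $0$ exists and equals $g(0)$. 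The identical argument applied on $[t,0]$ for $-\epsilon < t < 0$ shows the left-hand derivative of $\eta$ at $0$ also equals $g(0)$, so $\eta'(0)$ exists and equals $g(0)$.

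Combining this with the hypothesis $\eta' = g$ on $(-\epsilon,0) \cup (0,\epsilon)$ yields $\eta' = g$ on the entire interval $(-\epsilon,\epsilon)$, and since $g$ is continuous there, so is $\eta'$. There is no real obstacle in this argument; the only subtlety is remembering that the MVT requires only continuity on the closed interval and differentiability on the open one, which is exactly the setup provided.
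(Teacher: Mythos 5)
Your argument is correct and complete: the one-sided Mean Value Theorem applications, combined with the continuity of $g$ at $0$, do establish $\eta'(0) = g(0)$, after which $\eta' = g$ on all of $(-\epsilon,\epsilon)$ and continuity is immediate. The paper omits the proof of this lemma as straightforward, and the MVT route you take is the standard argument one would supply.
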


\begin{proof}[Proof of Theorem \ref{r:Thm.Mul}]
We will show that the sequence
\[
\yb^{[k]}(t) = (\yb_1^{[k]}(t), \dots, \yb_n^{[k]}(t))
\in
C([t_0, t_1], \R^2) \times C([t_1, t_2], \R^2) \times \cdots \times C([t_{n-1}, t_n], \R^2)
\]
is a Cauchy sequence by means of Lemma \ref{r:cauchy.cond}, where we place the supremum norm on each function space, with 
respect to absolute value on $\R$ and the sum norm on $\R^2$, and the maximum norm on their product. It is clear that the 
successive approximations converge to functions on the individual subintervals which when concatenated form a function $y(t)$
that is $C^1$ on $[a, b]$, $C^2$ on $[a, b] \setminus \{ t_1, \dots, t_{n-1} \}$, solves the differential equation in 
\eqref{basic.bvp} on the latter set, and satisfies the two boundary conditions in \eqref{basic.bvp}. An application of 
Lemma \ref{r:c2.lemma} at each of the $n - 1$ partition points implies the existence of the second derivative at the partition 
points, so that $y(t)$ solves the boundary value problem \eqref{basic.bvp}.

To begin the proof that $\yb^{[k]}(t)$ is a Cauchy sequence, by Lemma \ref{r:fyjk.estimate}
\begin{equation}\label{int.est.simple.gen}
\begin{aligned}
| I_r^{[k+1]} - I_r^{[k]} |
&=
\left|
\int_{t_{r-1}}^{t_r}  f_r^{[k+1]}(s) \, ds - \int_{t_{r-1}}^{t_r} f_r^{[k]}(s) \, ds
\right|
\\
&\leqslant
\int_{t_{r-1}}^{t_r}  | f_r^{[k+1]}(s) - f_r^{[k]}(s) | \, ds
\\
&\leqslant
L \, h || \yb^{[k+1]} - \yb^{[k]} ||_\textrm{max}
\end{aligned}
\end{equation}
and
\begin{equation}\label{int.est.complex.gen}
\begin{aligned}
| J_r^{[k+1]} - J_r^{[k]} |
&=
\left
| \int_{t_{r-1}}^{t_r} (t_r - s) f_r^{[k+1]}(s) \, ds - \int_{t_{r-1}}^{t_r} (t_r - s) f_r^{[k]}(s) \, ds
\right|
\\
&\leqslant
\int_{t_{r-1}}^{t_r} (t_r - s) | f_r^{[k+1]}(s) - f_r^{[k]}(s) | \, ds
\\
&\leqslant
\tfrac12 \, L \, h^2 || \yb^{[k+1]} - \yb^{[k]} ||_\textrm{max}.
\end{aligned}
\end{equation}

Then from \eqref{e:update.n.beta1} we have
\begin{align*}
| \beta_1^{[k+1]} - \beta_1^{[k]} |
&\leqslant
\frac{n-1}{n}
\left| J_1^{[k+1]} - J_1^{[k]} \right|
+   \sum_{r=2}^n           \left| J_r^{[k+1]} - J_r^{[k]} \right| \\
&\mspace{300mu}+ h \sum_{r=1}^{n-1} (n-r) \left| I_r^{[k+1]} - I_r^{[k]} \right|
\\
&\leqslant
\left[
\left(\frac{n-1}{n}\right) \frac12 h^2
+ \sum_{r=2}^n \frac12 h^2
+ h \sum_{r=1}^{n-1}(n-r) h
\right]
L || \yb^{[k+1]} - \yb^{[k]} ||_\textrm{max}
\\
&= \widehat B_1 h^2 L || \yb^{[k+1]} - \yb^{[k]} ||_\textrm{max},
\end{align*}
where $\widehat B_1 = \frac{n-1}2 \left[ \frac1n + 1 + n \right]$.

From \eqref{e:update.n.gamma1}
\begin{align*}
| \gamma_1^{[k+1]} - \gamma_1^{[k]} |
&\leqslant
\frac1h | \beta_1^{[k+1]} - \beta_1^{[k]} | + \frac1h | J_1^{[k+1]} - J_1^{[k]} | \\
&\leqslant
[\widehat B_1 h L + \frac12 h L ] || \yb^{[k+1]} - \yb^{[k]} ||_\textrm{max} \\
&=
\widehat \Gamma_1 h L || \yb^{[k+1]} - \yb^{[k]} ||_\textrm{max}
\end{align*}
and from repeated application of \eqref{e:update.n.gamma.j}, starting from $j =2$ up through
$j = n$,
\begin{align*}
| \gamma_j^{[k+1]} - \gamma_j^{[k]} |
&\leqslant
| \gamma_{j-1}^{[k+1]} - \gamma_{j-1}^{[k]} | + | I_{j-1}^{[k+1]} - I_{j-1}^{[k]} | \\
&\leqslant
[\widehat \Gamma_{j-1} h L + h L ] || \yb^{[k+1]} - \yb^{[k]} ||_\textrm{max} \\
&=
\widehat \Gamma_j h L || \yb^{[k+1]} - \yb^{[k]} ||_\textrm{max}
\end{align*}
with $\widehat \Gamma_j = \widehat B_1 + \frac{2j-1}{2}$, which is in fact valid for $1 \leqslant j \leqslant n$.

From repeated application of \eqref{e:update.n.beta.j}, starting from $j = n-1$ (with the
convention that $\beta_n = \beta$) and down through $j = 2$,
\begin{align*}
| \beta_j^{[k+1]} - \beta_j^{[k]} |
&\leqslant
| \beta_{j+1}^{[k+1]} - \beta_{j+1}^{[k]} |
+ h | \gamma_{j+1}^{[k+1]} - \gamma_{j+1}^{[k]} |
+ | J_{j+1}^{[k+1]} - J_{j+1}^{[k]} | \\
&\leqslant
[ \widehat B_{j+1} h^2 L + \widehat \Gamma_{j+1} h^2 L + \frac12 h^2 L ]
                                    || \yb^{[k+1]} - \yb^{[k]} ||_\textrm{max} \\
&=
\widehat B_j h^2 L || \yb^{[k+1]} - \yb^{[k]} ||_\textrm{max},
\end{align*}
and $\widehat B_{n-r} = r \widehat B_1 + r \, n - \frac{r (r-1)}{2}$, hence
$\widehat B_j = (n - j) \widehat B_1 + n (n - j) - \frac{(n - j)(n - j - 1)}{2}$, $2 \leqslant j \leqslant n - 1$.

Using these estimates we find that, setting $h^* = \max \{ h, h^2 \}$, for any $j \in \{ 2, \dots, n \}$, for any
$t \in [t_{j-1}, t_j]$,
\begin{align*}
| \yb_j^{[k+1]}&(t) - \yb_j^{[k]}(t) |_\text{sum} \\
&\leqslant
\left|
\begin{pmatrix}
\beta_{j-1}^{[k]} - \beta_{j-1}^{[k-1]} \\
\gamma_j^{[k]} - \gamma_j^{[k-1]}
\end{pmatrix}
\right|_\text{sum}
+
\int_{t_{j-1}}^{t_j}
\left|
\begin{pmatrix} u_j^{[k]}(s) - u_j^{[k-1]}(s) \\
f_j^{[k]}(s) - f_j^{[k-1]}(s)
\end{pmatrix}
\right|_\text{sum} \, ds \\
&\leqslant
| \beta_{j-1}^{[k]} - \beta_{j-1}^{[k-1]} | + | \gamma_j^{[k]} - \gamma_j^{[k-1]} |
+
(1 + L) \int_{t_{j-1}}^{t_j} | \yb_j^{[k]}(s) - \yb_j^{[k-1]}(s) |_\text{sum} \, ds \\
&\leqslant
[ \widehat B_{j-1} h^2 L + \widehat \Gamma_j h L ] \, || \yb^{[k]} - \yb^{[k-1]} ||_\textrm{max}
+
(1 + L) || \yb_j^{[k]} - \yb_j^{[k-1]} ||_\textrm{sup} \, h \\
&\leqslant
[ \widehat B_{j-1} L + \widehat \Gamma_j L + (1 + L) ] h^* || \yb^{[k]} - \yb^{[k-1]} ||_\textrm{max} \\
&=
[ ( \widehat B_{j-1} + \widehat \Gamma_j + 1 ) L + 1 ] h^* || \yb^{[k]} - \yb^{[k-1]} ||_\textrm{max} \\
&= c_j h^* || \yb^{[k]} - \yb^{[k-1]} ||_\textrm{max}
\end{align*}
where, using the expressions above for $\widehat B_1$ and $\widehat \Gamma_1$,
\[
c_j = \tfrac1{2n} [ n^4 + (3 - j) n^3 + n^2 + (3 j - j^2)n + (j - 2)] L + 1 \qquad (2 \leqslant j \leqslant n).
\]
Similarly, for all $t \in [t_0, t_1]$,
$| \yb_1^{[k+1]}(t) - \yb_1^{[k]}(t) |_\text{sum} \leqslant c_1 h || \yb^{[k]} - \yb^{[k-1]} ||_\textrm{max}$
for
\[
c_1 = \left[  \frac{n^3 + 3n - 1}{2n} \right] L + 1.
\]
Then for all $j \in \{ 1, \dots, n \}$,
\[
|| \yb_j^{[k]} - \yb_j^{[k-1]} ||_\textrm{sup} \leqslant c_j h^* || \yb^{[k]} - \yb^{[k-1]} ||_\textrm{max}
\]
and
\begin{equation}\label{e:yb.estimate}
|| \yb^{[k+1]} - \yb^{[k]} ||_\textrm{max}
\leqslant
\max \{ c_1, \ldots, c_n \} h^* || \yb^{[k]} - \yb^{[k-1]} ||_\textrm{max}.
\end{equation}

For fixed $n$, for $j \geqslant 2$, $c_j$ is a quadratic function of $j$ with maximum at $j =\frac{-n^3 + 3 n + 1}{2n}$,
which is negative for $n \geqslant 2$, so $c_2 > c_j$ for $j \geqslant 3$. Direct comparison shows that $c_2 > c_1$ for
all choices of $n$ as well. Thus estimate \eqref{e:yb.estimate} is
\[
|| \yb^{[k+1]} - \yb^{[k]} ||_\textrm{max} \leqslant  c_2 h^* || \yb^{[k]} - \yb^{[k-1]} ||_\textrm{max}
\]
and by Lemma \ref{r:cauchy.cond} the sequence $\yb^{[k]}(t)$ is a Cauchy sequence provided
$c_2 h^* < 1$, which, when $h = (b-a)/n \leqslant 1$ is the condition
\[
\frac1{2n} [ (n^3 + n^2 + n + 2) L + 2 ] (b - a) < 1
\]
and when $h = (b-a)/n \geqslant 1$ is the condition
\[
\frac1{2n^2} [ (n^3 + n^2 + n + 2) L + 2 ] (b - a)^2 < 1.
\]
\end{proof}

To see that Theorem \ref{r:Thm.Mul} can provide an actual improvement over Theorem \ref{thm.1}, suppose the
interval $[a, b]$ is fixed and we wish to know how large $L$ can be and still be assured that a solution to \eqref{basic.bvp} exists.
For any $n > b - a$ the corresponding maximum value of $L$ allowed by Theorem \ref{r:Thm.Mul} is greater than that allowed by 
Theorem \ref{thm.1} if
\[
\tfrac23
\left[
(b - a)^{-1} - 1 
\right]
<
\frac{1}{n^3 + n^2 + n + 2} 
\left[
2n (b - a)^{-1} - 2
\right],
\]
which holds if and only if
\begin{equation}\label{e:L.est}
\frac{n^3 + n^2 - 2n+ 2}{n^3 + n^2 + n - 1} < b - a.
\end{equation}
For $n > 1$ the left hand side of \eqref{e:L.est} is less than 1 and increases with increasing $n$. Thus for example
if $b - a = 1$ then Theorem \ref{thm.1} guarantees that a solution to \eqref{basic.bvp} will exist if $L < 0$, so no
conclusion can be made, whereas for all $n \geqslant 2$ Theorem \ref{r:Thm.Mul} implies existence of a unique solution
if
\[
L < \frac{2n - 2}{n^3 + n^2 + n - 2}.
\]
The best result is for $n = 2$ and is $L < \frac16$.

Similarly, if a Lipschitz constant $L$ is known to exist for all values of $t$, $y$, and $y'$, then 
Theorem \ref{r:Thm.Mul} can sometimes provide a guarantee of existence of a solution to a boundary value problem of the
form \eqref{basic.bvp} on a longer interval than that provided by Theorem \ref{thm.1}.

%%%%%%%%%%%%%%%%%%%%%%%%%%%%%%%%%%%%%%%%%%%%%%%%%%%%%%%%%%%%%%%%%%%%%%%%%%%%%%%%%%%%%%%%%%%%%%%%%%%%%%%%%%%%%%%%%%%
\section{Conclusion}\label{s:concl}
%%%%%%%%%%%%%%%%%%%%%%%%%%%%%%%%%%%%%%%%%%%%%%%%%%%%%%%%%%%%%%%%%%%%%%%%%%%%%%%%%%%%%%%%%%%%%%%%%%%%%%%%%%%%%%%%%%%

We have introduced a purely symbolic technique for approximating solutions of two-point boundary value
problems whose output is a sequence of polynomials that converges to the true solution exponentially fast
with respect to the supremum norm. We provided conditions under which the method is guaranteed to work, and
illustrated by example that its practical usefulness exceeds what the theorems provide. By introducing auxiliary
variables we overcome problems with quadratures that cannot be performed in closed form. The algorithm is
easy to code in popular comptuer algebra systems such as Maple and Mathematica. Experience has shown that it
compares favorably in efficiency with shooting and finite difference approximation techniques.

\end{document}